\documentclass[11pt]{amsart}

\usepackage{amssymb}
\usepackage[colorlinks=true, urlcolor=rltblue, citecolor=drkgreen, linkcolor=drkred] {hyperref}
\usepackage{color}
\usepackage{pdfsync}
\definecolor{rltblue}{rgb}{0,0,0.4}
\definecolor{drkgreen}{rgb}{0,0.4,0}
\definecolor{drkred}{rgb}{0.5,0,0}






\addtolength{\voffset}{-10mm}
\addtolength{\textheight}{20mm}
\addtolength{\hoffset}{-15mm}
\addtolength{\textwidth}{30mm}

\newtheorem{thm}{Theorem}[section]
\newtheorem{lemma}[thm]{Lemma}

\newtheorem{theorem}[thm]{Theorem}

\theoremstyle{definition}
\newtheorem{definition}[thm]{Definition}

\theoremstyle{remark}

\newtheorem{remark}[thm]{Remark}

\newtheorem{historic}[thm]{Historic Remark}

\theoremstyle{plain}



\newcounter{contenumi}


\def\leqt{\leq_T}
\def\geqt{\geq_T}

\def\teq{\equiv_T}

\def\upto{\mathop{\upharpoonright}}
\def\la{\langle}
\def\ra{\rangle}
\def\and{\mathrel{\&}}

\def\sminus{\smallsetminus}

\def\Si{\Sigma}

\newcommand\rightdate[1]{\footnotetext{  Saved: #1 \\ Compiled: \today}}

\def\om{\omega}

\def\si{\sigma}
\def\b{\beta}







 







\def\a{\alpha}
\def\b{\beta}

\def\QQ{{\mathbb Q}}
\def\M{{\mathcal M}}

\def\rk{rk}


\def\g{\gamma}
\def\d{\delta}


\def\Si{\Sigma}

\def\S{{\mathcal S}}

\def\Sp {\mathop{\mathrm{Sp}}}
\def\om{\omega}

\def\Sp{Sp}

\def\Equ{\ E\ }
\newcommand{\Equu}[1]{\ E_{#1}\ }

\def\siha{\hat{\si}}
\def\tauha{\hat{\tau}}
\def\That{\hat{T}}




\title{Analytic equivalence relations satisfying hyperarithmetic-is-recursive}

\author{Antonio Montalb\'an}
\thanks{The author was partially supported by NSF grant DMS-0901169 and the Packard Fellowship.
The author would also like to thank Richard A. Shore for useful conversations, and Andrew Marks for commenting on an earlier draft of this paper.
This paper was written while the author participated in the Buenos Aires 
Semester in Computability, Complexity and Randomness, 2013.
}

\address{Department of Mathematics\\
University of California, Berkely\\
 USA}


\email{antonio@math.berkeley.edu}
\urladdr{\href{http://www.math.berkeley.edu/~antonio/index.html}{www.math.berkeley.edu/$\sim$antonio}}


\begin{document}

\rightdate{June 5, 13 -- submitted}
\maketitle


%

\begin{abstract}
We prove, in ZF+$\bf\Sigma^1_2$-determinacy, that for any analytic equivalence relation $E$, the following three statements are equivalent:
(1) $E$ does not have perfectly many classes, (2) $E$ satisfies hyperarithmetic-is-recursive on a cone, and (3) relative to some oracle, for every equivalence class $[Y]_E$ we have that a real $X$ computes a member of the equivalence class if and only if $\om_1^X\geq\om_1^{[Y]}$.

We also show that the implication from (1) to (2) is equivalent to the existence of sharps over $ZF$.
\end{abstract}

\section{Introduction}

In 1955 Clifford Spector \cite{Spe55} proved that every well-ordering of $\om$ with a hyperarithmetic presentation has a computable presentation.
This theorem has been of great importance in recursion theory, in lightface descriptive set theory, etc.
In this paper we prove that Spector's theorem can be extended to very general circumstances which apply to a variety of known cases, unearthing a more general phenomenon that is behind all of them.

Some years ago, the author \cite{MonEqui, MonBSL} showed that Spector's theorem can be extended to the class of all linear ordering if we replace isomorphism by bi-embeddability:
Every hyperarithmetic linear ordering is bi-embeddable with a computable one.
Notice that among well-orderings, the notions of isomorphism and bi-embeddability coincide, so Spector's theorem is a special case of this more general result.
Not much later, Greenberg and the author showed the same result for bi-embeddability of $p$-groups \cite{GMRanked}.
Let us remark that for both, countable linear orderings and countable $p$-groups, the number of equivalence classes under bi-embeddability is $\aleph_1$, as proved by Laver \cite{Lav71}, and Barwise and Eklof \cite{BE70} respectively.
Some time later, the author showed that any counter-example to Vaught's conjecture, that is a theory which has $\aleph_1$ but not continuum many models, if exists, it would also satisfy the same property \cite{MonVC}, giving a computability theoretic statement equivalent to Vaught's conjecture.
After all these examples we started to think that something more general was going on.

\begin{definition}
We say that an equivalence relation $E$ on the reals, $2^\om$, satisfies {\em hyperarithmetic-is-recursive} if every hyperarithmetic real is $E$-equivalent to a computable one.
\end{definition}

Our main result says that any analytic equivalence relation with less than continuum many equivalence classes {\em essentially} satisfies hyperarithmetic-is-recursive.
We say ``essentially'' because one can alway build a non-natural equivalence relation for which this is not true.
To overcome this problem we ask for the equivalence relation to satisfy hyperarithmetic-is-recursive relative to almost every oracle, where ``almost every'' is in the sense of Martin's measure.
If we have a natural equivalence relation at hand, one would expect to be able to prove either that it satisfies hyperarithmetic-is-recursive or that it does not, and in either case, one would expect that this proof to relativize to every oracle.
Therefore, restricting oneself to almost every oracle should not make a difference on natural equivalence relations.

By {\em Martin's measure} we mean the $\{0,1\}$-measure, where a set of reals has {\em Martin's measure 1} if it contains a cone, where a {\em cone} a set of reals of the form $\{X\in 2^\om: X\geqt Y\}$ for some $Y$ called the {\em base of the cone}.
D.\ A.\ Martin showed that this is a measure on the degree-invariant sets of reals of complexity $\Gamma$, assuming $\Gamma$-determinacy, where $\Gamma$ is a complexity class like for instance Borel, analytic, etc.

\begin{definition}
We say that an equivalence relation on $2^\om$ satisfies {\em hyperarithmetic-is-recursive on a cone} if there is a $C\in 2^\om$ (the base of the cone) such that for every $X$ which computes $C$, every $X$-hyperarithmetic real is equivalent to an $X$-computable one.
\end{definition}

Here is our main theorem.

\begin{theorem}\label{thm: main}
(ZF+$\mathbf\Si^1_2$-Det)
Let $E$ be an analytic equivalence relation on $2^\om$.
The following are equivalent:
\begin{enumerate}\renewcommand{\theenumi}{H\arabic{enumi}}
\item There is no perfect set all whose elements are $E$-inequivalent.    \label{pa: no perfect}
\item $E$ satisfies hyperarithmetic-is-recursive on a cone.      \label{pa: hyp is recursive}
\item There is an oracle relative to which, for every $Y\in 2^\om$, the {\em degree spectrum} of its equivalence class, $\Sp([Y]_E)$, is of the form $\{X\in 2^\om: \om_1^X\geq\a\}$ for some ordinal $\a\in\om_1$.   \label{pa: low is recursive}
\end{enumerate}
\end{theorem}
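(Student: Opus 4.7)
The plan is to establish the cyclic chain $\ref{pa: low is recursive}\Rightarrow\ref{pa: hyp is recursive}\Rightarrow\ref{pa: no perfect}\Rightarrow\ref{pa: low is recursive}$. The first two arrows should go through in ZF, while the last is where $\mathbf\Si^1_2$-determinacy (equivalently, sharps for every real) enters; composing the cycle then recovers the implication $\ref{pa: no perfect}\Rightarrow\ref{pa: hyp is recursive}$, in line with the paper's announced secondary result that this implication is equivalent to the existence of sharps over ZF.

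For $\ref{pa: low is recursive}\Rightarrow\ref{pa: hyp is recursive}$, I take the oracle $A$ from $\ref{pa: low is recursive}$ as the cone base. Given $X\geqt A$ and $W$ an $X$-hyperarithmetic real, we have $\om_1^W=\om_1^X$. By $\ref{pa: low is recursive}$, $\Sp([W]_E)=\{Z:\om_1^Z\geq\alpha\}$ for some countable $\alpha$, and $\alpha\leq\om_1^W$ since $W$ itself lies in $\Sp([W]_E)$. Hence $\om_1^X\geq\alpha$, so $X\in\Sp([W]_E)$ and therefore $X$ computes a representative of $[W]_E$.

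For $\ref{pa: hyp is recursive}\Rightarrow\ref{pa: no perfect}$, I argue by contraposition. Let $T\subseteq 2^{<\om}$ be a perfect tree with pairwise $E$-inequivalent paths, coded by $D$, and fix any candidate cone base $C$; set $B=C\oplus D$. Enumerate the $B$-computable reals as $(Y_n)_{n\in\om}$. Because the paths of $T$ are pairwise inequivalent, each $Y_n$ is $E$-equivalent to at most one path $Z_n\in[T]$, and since this condition is a $\Sigma^1_1(B)$ formula with a uniqueness clause, the partial function $n\mapsto Z_n$ is uniformly $\Delta^1_1(B)$. I then build $X\in[T]$ by diagonalization: at stage $n$, extend past the next splitting node of $T$, choosing the child that disagrees with $Z_n$ (when defined). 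The resulting $X$ is $B$-hyperarithmetic, lies in $[T]$, and is not $E$-equivalent to any $B$-computable real, contradicting $\ref{pa: hyp is recursive}$ applied at $B\geqt C$.

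For $\ref{pa: no perfect}\Rightarrow\ref{pa: low is recursive}$, Burgess's dichotomy for analytic equivalence relations first reduces the hypothesis to $E$ having at most $\aleph_1$ classes. For each $Y$ set $\alpha_Y=\min\{\om_1^Z:Z\in[Y]_E\}$. The inclusion $\Sp([Y]_E)\subseteq\{X:\om_1^X\geq\alpha_Y\}$ is automatic, since $X\geqt Z$ forces $\om_1^X\geq\om_1^Z$. The reverse inclusion is the crux and is where sharps are used: given $X$ with $\om_1^X\geq\alpha_Y$, I locate an $X$-computable member of $[Y]_E$ via the $L[X,Y^\#]$-least code for such a member, using $\Sigma^1_2$-absoluteness from $V$ down to $L[X,Y^\#]$ to certify that this code names an element genuinely in $[Y]_E$, and using the lower bound $\om_1^X\geq\alpha_Y$ to certify that the witness appears at an $X$-effective stage. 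Producing a representative uniformly from just a bound on $\om_1^X$ is where I expect the main difficulty to lie, and indeed its equivalence to the existence of sharps is exactly what the second announced result of the paper records. The oracle for $\ref{pa: low is recursive}$ then absorbs the parameters coming out of Burgess's theorem together with whatever global information about sharps is needed to make the construction uniform in $Y$.
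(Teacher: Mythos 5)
Your step (\ref{pa: low is recursive})$\Rightarrow$(\ref{pa: hyp is recursive}) is essentially right (modulo using the relativized spectrum and noting that one only has $\om_1^W\leq\om_1^X$, which is all you need), but the other two implications have genuine gaps. In (\ref{pa: hyp is recursive})$\Rightarrow$(\ref{pa: no perfect}), the claim that your diagonalized path is $B$-hyperarithmetic is unjustified. The uniqueness clause only gives that each individual $Z_n$, when it exists, is $\Delta^1_1(B)$ (a $\Si^1_1$ singleton is hyperarithmetic); but the map $n\mapsto Z_n$ is partial with $\Si^1_1(B)$ domain, and at each stage of your construction you must decide a genuinely $\Si^1_1(B)$ question (whether $Z_n$ exists and which branch of the next split it follows). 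As described, the construction is computable from the hyperjump of $B$, not hyperarithmetically in $B$; and mere existence of a path avoiding the countable $\Si^1_1(B)$ set of bad paths does not yield a hyperarithmetic one, since a nonempty $\Pi^1_1(B)$ set need not have $\Delta^1_1(B)$ members. This is exactly where the paper does extra work: it approximates $E$ by the uniformly $\Si^0_{\a+1}$ equivalence relations $E_\a$ of Lemma \ref{le: Burguess}, uses $\Si^1_1$-boundedness to find a single $\a<\om_1^B$ at which all pairs of paths are already $E_\a$-inequivalent, and then takes the path determined by an $(\a+1)$-generic; the bounded Borel complexity is what makes the witness hyperarithmetic and makes the uniqueness argument a forcing contradiction. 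Without some such boundedness step your argument does not close.

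The implication (\ref{pa: no perfect})$\Rightarrow$(\ref{pa: low is recursive}) is the heart of the theorem, and your sketch does not contain a proof of it: asserting that you ``locate an $X$-computable member of $[Y]_E$ via the $L[X,Y^\sharp]$-least code'' and ``certify that the witness appears at an $X$-effective stage'' is not an argument, since there is no reason the $L[X,Y^\sharp]$-least member of $[Y]_E$ should be computable from $X$ -- you acknowledge this is where the difficulty lies. Note also that $\mathbf\Si^1_2$-determinacy is not ``equivalently, sharps for every real''; sharps correspond to $\mathbf\Si^1_1$-determinacy, which is why the reversal (Theorem \ref{thm: reversal}) concerns only (\ref{pa: no perfect})$\Rightarrow$(\ref{pa: hyp is recursive}), and why the paper proves that implication directly from $\Si^1_1$-determinacy rather than routing it through (\ref{pa: low is recursive}) as your cyclic plan would (which would need the stronger hypothesis). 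The paper's actual route is: show that $\S=\{X:\forall Y\,(\om_1^{X\oplus Y}=\om_1^X\Rightarrow X\in\Sp([Y]_E))\}$ is cofinal in the Turing degrees (Lemma \ref{le: cofinal}), using the $E_\a$ approximations, Silver's theorem at each Borel level, a transversal coded as $A\subseteq\om_1\times\om\times\om$, Barwise compactness to get an ill-founded model whose well-founded ordinal part is exactly $\a$, and an overspill argument inside that model; then apply $\Si^1_2$-Turing determinacy to this $\Pi^1_2$ degree-invariant set to get a cone; and finally, on that cone, use Harrington's lemma producing $G$ with $\om_1^{X}=\om_1^{X\oplus G}=\om_1^G=\om_1^{G\oplus Z}=\om_1^{Z}$ to obtain an $X$-computable representative when $\om_1^X\geq\om_1^{[Y]}$ but $\om_1^{X\oplus Y}>\om_1^X$. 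None of these ingredients, nor substitutes for them, appear in your proposal, so the key direction remains unproved.
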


Burgess \cite{Bur78} showed that given an analytic equivalence relation, either it has at most $\aleph_1$ many equivalence classes, or it has perfectly many classes (i.e.\ there is a perfect set of $E$-inequivalent reals).
Thus, if the continuum hypothesis is false, saying that $E$ does not have perfectly many classes is equivalent to saying that it has $\leq\aleph_1$ many classes.
The existence of such a perfect set is absolute--it is $\Si^1_2$--and does not depend on the continuum hypothesis.

The {\em degree spectrum} of an equivalence class is the analog of the degree spectrum of a structure, a notion widely studied in Computable Structure Theory.
It gives us a way of measuring the complexity of the equivalence class in terms of how difficult it is to compute a member.
More precisely, define
\[
\Sp([Y]_E) = \{X\in 2^\om: \exists W \leqt X\ (W\Equ Y)\}.
\]
The set $\{X\in 2^\om: \om_1^X\geq\a\}$ is the set of all reals that can compute copies of all ordinals below $\a$.
It is a very particular set, and the fact that the spectrum of any equivalence class would have this form seems to be a very strong statement.
Let us remark that the relativized version of the spectrum is defined as follows: $\Sp^Z([Y]_E)= \{X\in 2^\om: \exists W\leqt X\oplus Z\ (W\Equ Y)\}$ and that the set $\{X: \om_1^X\geq\a\}$ relativized to $Z$ becomes $\{X: \om_1^{X\oplus Z}\geq\a\}$.

Let us observe that this result applies to all the examples mentioned before.
For instance, let $X\ E_{\om_1}\ Y$ if either neither of $X$ and $Y$ is coding a well-ordering of $\om$, or the orderings they code are isomorphic.
This is a $\Si^1_1$ equivalence relation with one equivalence class for each countable ordinal, and one equivalence class for all the reals not coding a well-ordering.
It has $\aleph_1$ equivalence classes, and by Spector's theorem it satisfies hyperarithmetic-is-recursive.
We can do the same with bi-embeddability of linear orderings or $p$-groups, which we know have $\aleph_1$ equivalence classes.
So, Theorem \ref{thm: main} tells us that they satisfy hyperarithmetic-is-recursive on a cone.
The proofs in \cite{MonEqui,GMRanked} proved these results relative to every oracle, and not just on a cone.
Our general proof does not say anything about what happens relative to every oracle other than we expect the behavior to be the same relative to every oracle and relative to almost every oracle if the relation is natural enough.
The proofs in \cite{MonEqui,GMRanked} still require a deep analysis of the embeddability relation among linear orderings and $p$-groups used for those results.
In \cite{MonVC} the author showed that any counter-example to Vaught's conjecture must satisfy hyperarithmetic-is-recursive on a cone, and that result follows directly from Theorem \ref{thm: main}.
However, the proof in \cite{MonVC} is much more constructive, and analyses the structure among the models of counter-example to Vaught's conjecture, something we do not get from the proof in this paper.

\

Theorem \ref{thm: main} uses ZF+$\mathbf\Si^1_2$-Determinacy.
That (\ref{pa: low is recursive}) implies (\ref{pa: hyp is recursive}), and that (\ref{pa: hyp is recursive}) implies (\ref{pa: no perfect}), can be proved in just $ZF$.
The use of ZF+$\mathbf\Si^1_2$-Determinacy is only necessary to show that (\ref{pa: no perfect}) implies (\ref{pa: low is recursive}).
That (\ref{pa: no perfect}) implies (\ref{pa: hyp is recursive}) only requires $\mathbf\Si^1_1$-Determinacy, which is equivalent to  the existence of sharps ($\forall X\ (X^\sharp $ exists)), as proved by Harrington \cite{Har78}.
We show that the use of $\mathbf\Si^1_1$-Determinacy is actually necessary:

\begin{theorem}(ZF)\label{thm: reversal}
The following statements are equivalent:
\begin{enumerate} \renewcommand{\theenumi}{O\arabic{enumi}}
\item Every lightface $\Si^1_1$ equivalence relation without perfectly many classes satisfies hyperarithmetic-is-recursive on a cone. \label{pa: 1 implies 2}
\item $0^\sharp$ exists. \label{pa: zero sharp}
\end{enumerate}
\end{theorem}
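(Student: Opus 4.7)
The plan is to prove the equivalence by establishing the two directions separately.

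For $(O2)\Rightarrow(O1)$, I would invoke Harrington's theorem \cite{Har78}, which identifies the existence of $0^\sharp$ with lightface $\Si^1_1$-determinacy. The idea is to lightface-refine the proof of $(H1)\Rightarrow(H2)$ of Theorem \ref{thm: main}: provided that proof applies $\mathbf\Si^1_1$-Det only to sets definable from $E$ together with the cone base, when $E$ is lightface $\Si^1_1$ and the cone base is chosen to compute $0^\sharp$, those auxiliary sets are themselves lightface, so lightface $\Si^1_1$-Det (hence $0^\sharp$) suffices. The conclusion then holds on a cone above $0^\sharp$.

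For $(O1)\Rightarrow(O2)$ I would argue contrapositively. Assume $\neg 0^\sharp$. By Harrington, lightface $\Si^1_1$-Det fails, and since $X^\sharp$ existing would imply $0^\sharp$ (via the restriction of an elementary embedding $L[X]\to L[X]$ to $L$), this failure persists under relativization: $\Si^1_1(Z)$-Det fails for every real $Z$. The goal is to encode this systematic failure of determinacy into a lightface $\Si^1_1$ equivalence relation $E$ that witnesses the failure of (O1). I would start from a lightface $\Si^1_1$ set $A\subseteq\om^\om$ for which neither player has a winning strategy in the game on $A$, and design $E$ so that its classes are indexed by a well-founded hierarchy of countable rank (giving at most $\aleph_1$ classes and, by Burgess, no perfectly many classes), while each class encodes game-theoretic information about $A$ tightly enough that a $Z$-computable representative of some specific class would decode into a $Z$-computable winning strategy for the relativized game. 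The relativized failure of Det then furnishes, for each oracle $Z$ on any purported cone, a $Z$-hyperarithmetic real in that class with no $Z$-computable equivalent.

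The main obstacle is the precise design of $E$: one needs an equivalence relation that is lightface $\Si^1_1$, whose classes form a countable-rank hierarchy (to bound their number and avoid perfect sets of inequivalent reals), and that converts ``no $Z$-computable winning strategy exists'' into ``this specific $Z$-hyperarithmetic real has no $Z$-computable $E$-equivalent.'' Constructing this bridge---keeping the witness $Z$-hyperarithmetic rather than merely $Z$-$\Si^1_1$---is the technical heart of the argument. Once it is in place, varying $Z$ through any proposed cone delivers the failure of HIR on every cone, and the contrapositive is established.
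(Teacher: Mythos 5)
Your first direction is fine: for (O\ref{pa: zero sharp})$\Rightarrow$(O\ref{pa: 1 implies 2}) the paper does exactly what you propose, observing that when $E$ is lightface $\Si^1_1$ the set of oracles used in the proof of (H\ref{pa: no perfect})$\Rightarrow$(H\ref{pa: hyp is recursive}) is itself lightface $\Si^1_1$, so lightface $\Si^1_1$-Turing determinacy suffices, and that is equivalent to the existence of $0^\sharp$ by Harrington.

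The direction (O\ref{pa: 1 implies 2})$\Rightarrow$(O\ref{pa: zero sharp}) has a genuine gap: the equivalence relation that is supposed to witness the contrapositive is never constructed, and what you call ``the technical heart''---designing $E$ so that a $Z$-computable representative of a designated class decodes into a $Z$-computable winning strategy for an undetermined $\Si^1_1(Z)$ game---is precisely the claim that needs proof. As stated it is also doubtful: under $\neg 0^\sharp$ the relevant games have no winning strategies of any complexity, so to reach a contradiction you would need an $E$-class whose computable members code genuine strategies for a game that in fact has none, and no mechanism is offered for an analytic equivalence relation (with only $\aleph_1$ classes, uniformly for every oracle on every candidate cone) to enforce this; the difficulty is not merely keeping the witness $Z$-hyperarithmetic. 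The paper takes a different, game-free route that you would need to supply or replace: it defines $E$ on codes of structures $(L_\a(A);\in)$, declaring two codes equivalent when they have the same ordinal height $\a$ and the same $\om_1^A$ (and lumping all non-codes together); this is lightface $\Si^1_1$ with exactly $\aleph_1$ classes, hence without perfectly many classes by absoluteness. Applying (O\ref{pa: 1 implies 2}) gives a cone on which HIR holds; for $Y$ on that cone, each $Y$-hyperarithmetic code of $(L_\a(Y);\in)$ with $\a<\om_1^Y$ has a $Y$-computable $E$-equivalent, i.e.\ a code of some $(L_\a(Z);\in)$ with $\om_1^Z=\om_1^Y$, and a $\Si^1_1(Y)$ overspill argument along the Harrison ordering produces a nonstandard $\b^*$ and $Z^*$ such that $Y$ computes a copy of $(L_{\b^*}(Z^*);\in)$, whence every $W\leq_{hyp}Z^*$ satisfies $W\leqt Y$. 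This places the whole cone inside Sami's set, and Sami's result (every admissible relative to the base is an $L$-cardinal) together with Silver's theorem yields $0^\sharp$. In short, the reduction you need is from HIR to the largeness of admissible ordinals in $L$, not to computable winning strategies, and without that bridge your contrapositive does not go through.
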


This theorem will be proved in Section \ref{se: reversal}.

\

An interesting remark about our main theorem \ref{thm: main} is that it shows how cardinality issues get reflected at the hyperarithmetic/computable level.

\section{The proof of the main theorem}

We start by proving the following effective version of Burgess' Theorem \cite[Corollary 1]{Bur79}.

\begin{lemma}\label{le: Burguess}
For every $\Si^1_1$ equivalence relation $E$ there is a decreasing nested sequence of equivalence relations $\{E_\a:\a\in \om_1\}$ such that $E_\a$ is $\Si^0_{\a+1}$ uniformly in $\a$, and $E=\bigcap_{\a\in\om_1} E_\a$.
\end{lemma}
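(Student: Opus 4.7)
My plan is to realize $E$ via a tree representation and then define Borel approximations by truncating the tree at each rank, using a tree engineered so that the approximations automatically respect the equivalence structure. Since $E$ is $\Si^1_1$, I fix a recursive tree $T$ on $\om\times\om\times\om$ with $E(x,y)$ iff the section $T_{x,y}:=\{\si\in\om^{<\om}:(x\upto|\si|,y\upto|\si|,\si)\in T\}$ is ill-founded, and by symmetrizing I may assume $T_{x,y}=T_{y,x}$. Setting $F_\a(x,y)$ to mean $\mathrm{rk}(T_{x,y})\geq\a$ gives a decreasing Borel sequence with $\bigcap_{\a<\om_1}F_\a=E$, and the usual effective calculation places $F_\a$ at $\Si^0_{\a+1}$ uniformly in $\a$; the issue is that $F_\a$ need not be transitive.

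\medskip

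To repair this, I would replace $T$ by a richer recursive tree $T^*$ whose branches at $(x,y)$ encode an arbitrary finite ``$E$-chain'' of intermediate reals $x=u_0,u_1,\dots,u_m=y$ together with a $T$-witness for each consecutive pair $(u_i,u_{i+1})$. Since $E$ is reflexive, symmetric, and transitive, $T^*_{x,y}$ is ill-founded iff $E(x,y)$. Setting $E_\a(x,y)$ to mean $\mathrm{rk}(T^*_{x,y})\geq\a$, the equivalence axioms come from natural tree operations: reflexivity follows from the trivial one-step chain together with reflexivity of $E$; symmetry follows from reversing a chain, which is rank-preserving after symmetrization of $T$; and transitivity follows from concatenation -- given a rank-$\geq\a$ chain for $(x,y)$ and one for $(y,z)$, one stitches them together with $y$ as a new intermediate to get a chain in $T^*_{x,z}$ whose joint-extension subtree has rank at least the minimum of the two component ranks, hence $\geq\a$. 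The $E_\a$'s are manifestly decreasing in $\a$ and $\bigcap_{\a<\om_1}E_\a=E$.

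\medskip

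The main obstacle I anticipate is controlling the Borel complexity: I must verify that $\mathrm{rk}(T^*_{x,y})\geq\a$ remains $\Si^0_{\a+1}$ uniformly in $\a$, despite $T^*$'s added combinatorial structure (unbounded chain length, free intermediate strings). This reduces to the observation that $T^*$ is still a recursive tree uniformly in $T$ -- at tree-level $k$ only finitely much data appears -- together with the standard effective calculation that ``a recursive tree has rank $\geq\a$'' sits at the expected Borel level uniformly in $\a$.
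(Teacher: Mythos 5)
Your proposal is correct and is essentially the paper's own proof: symmetrize the Kleene tree, pass to a tree whose branches encode finite $E$-chains of intermediate reals with synchronized $T$-witnesses (the paper organizes this as the root-identified disjoint union $\sum_k T^k_{X,Y}$), define $E_\a$ by rank $\geq\a$, and get transitivity by concatenating length-synchronized chains, whose product has rank at least the minimum of the two ranks. The remaining details you flag (level-by-level synchronization so each node carries only finite data, and the uniform $\Si^0_{\a+1}$ bound on ``rank $\geq\a$'') are handled in the paper exactly as you anticipate.
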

\begin{proof}
Using Kleene's normal form, let $T$ be a computable sub-tree of $2^{<\om}\times \om^{<\om}\times 2^{<\om}$ such that for all $X,Y$, if we let 
\[
T_{X,Y}=\{\si\in\om^{<\om}: (X\upto|\si|,\si,Y\upto|\si|)\in T\},
\]
then $X\Equ Y$ if and only if $T_{X,Y}$ is ill-founded.
The first wrong idea would be to let $E_\a=\{(X,Y): \rk(T_{X,Y})\geq\a\}$, which is known to be $\Si^0_{\a+1}$ uniformly in $\a$ and satisfies $E=\bigcap_{\a\in\om_1} E_\a$.
Unfortunately $E_\a$ might not be transitive or symmetric.
In Burgess' proof \cite{Bur79} he shows that, for a club of ordinals $\a$, $E_\a$ is an equivalence relation, which is all he needs to get his result.
This is not enough for our more effective version.

To get the symmetry property, let us replace $T$ by the tree $T\cup \{(\tau,\si,\rho): (\rho,\si,\tau)\in T\}$.
This way we get that $T_{X,Y}=T_{Y,X}$, and we still have that $X\Equ Y\iff \neg WF(T_{X,Y})$.

We will modify the tree even further to get transitivity.
For each $k\geq 1$ and $X,Y\in 2^\om$, let 
\begin{multline*}
T^k_{X,Y} =\{ (\si_1,\tau_1,\si_2,\tau_2,...,\tau_{k-1},\si_k)\in \om^n\times 2^{n}\times\cdots\times 2^n\times \om^n: \\
		 n\in\om, (X\upto n,\si_1,\tau_1)\in T, (\tau_1,\si_2,\tau_2)\in T,...,(\tau_{k-1},\si_k,Y\upto n)\in T\}.
\end{multline*}
Note that $T^1_{X,Y}=T_{X,Y}$.
Let $\That_{X,Y} = \sum_{k\in\om} T^k_{X,Y}$, that is the disjoint union of all the $T^k_{X,Y}$ identifying the roots of all these trees.
We note that $X\Equ Y\iff \neg WF(\That_{X,Y})$:
This is because if there is a path through one of the $T^k_{X,Y}$, then we would have $(Z_1,X_1,...,X_{k-1},Z_k)$ such that, for all $i$, $Z_{i+1}\in T_{X_i,X_{i+1}}$ where $X_0=X$ and $X_{k+1}=Y$, and hence  $X=X_0\Equ X_1\Equ X_2 \Equ ...\Equ X_{k+1}=Y$.
On the other hand, if $X\Equ Y$, then $T^1_{X,Y}$ is ill-founded, and hence so is $\That_{X,Y}$.

We are now ready to define $E_\a$ as follows.
Let 
\[ 
X \Equu\a Y\iff \rk(\That_{X,Y})\geq \a.
\]
We still have that $X\Equ Y\iff (\forall\a<\om_1)\ X\Equu\a Y$, that these relations are nested, and that they are uniformly $\Si^0_{\a+1}$.
We now claim that each $E_{\a}$ is an equivalence relation.
They are reflexive just because $E$ is.
It is not hard to see that $\rk(\That_{X,Y})=\rk(\That_{Y,X})$, and hence that $E_\a$ is  symmetric.

To prove transitivity suppose that $X \Equu\a Y \Equu\a Z$.
Then, since $\rk(\That_{X,Y})=\sup\{\rk(T^k_{X,Y}):k\in\om\}$, for every $\b<\a$ there exist $k,l\in\om$, $\rk(T^k_{X,Y})\geq\b$ and $T^l_{Y,Z}\geq\b$.
We claim that $T^{k+l}_{X,Z}\geq\b$, which would imply that $\rk(\That_{X,Y})\geq\a$ and hence that $X\Equu\a Z$ as needed.
For each $(\si_1,\tau_1,\si_2,\tau_2,...,\si_k)\in T^k_{X,Y}$ and $(\siha_1,\tauha_1,\siha_2,\tauha_2,...,\siha_l)\in T^l_{Y,Z}$ of the same length $n$, we note that $(\si_1,\tau_1,\si_2,\tau_2,...,\si_k,Y\upto n, \siha_1,\tauha_1,\siha_2,\tauha_2,...,\siha_l)\in T^{k+l}_{X,Z}$.
This is an order-preserving embedding from $\{(\rho,\pi)\in T^k_{X,Y}\times T^l_{Y,Z}: |\rho|=|\pi|\}$ into $T^{k+l}_{X,Z}$.
It follows that $\rk(T^{k+l}_{X,Z})\geq \min\{T^k_{X,Y}, T^l_{Y,Z}\}$, and hence that $T^{k+l}_{X,Z}\geq\b$ as wanted.
\end{proof}

\begin{remark}\label{rmk: bound}
Notice that if $X\Equu{\om_1^{X\oplus Y}} Y$, then $X\Equ Y$.
This is because $\That_{X,Y}$ is computable in ${X\oplus Y}$, and hence, if it is well-founded, it has rank below $\om_1^{X\oplus Y}$.
\end{remark}

The following is the key lemma to prove the main direction of Theorem \ref{thm: main}.
We will then apply Turing determinacy to the set considered in the lemma, or to a variation of it, to get what we want.
Recall that, for a complexity class $\Gamma$, {\em $\Gamma$-Turing determinacy} says that any degree-invariant $\Gamma$-set of reals $\S$ which is co-final in the Turing degrees contains a cone.
(A set $\S$ is {\em degree invariant} if $\forall X\teq Y\ (X\in \S \leftrightarrow Y\in S)$, and it is {\em co-final} if $\forall Z\in 2^\om\ \exists X\geqt Z\ (X\in \S)$.)
$\Gamma$-Turing determinacy is due to D.\ A.\ Martin, and follows from plain $\Gamma$-determinacy. 

\begin{lemma}(ZF) \label{le: cofinal}
For every analytic equivalence relation $E$ without perfectly many classes, 
the set $\S\subseteq 2^\om$, defined as follows
\[
\S=\{X\in 2^\om: \forall Y\ (\om_1^{X\oplus Y}=\om_1^X \Rightarrow X\in \Sp([Y]_E)\},
\]
is co-final in the Turing degrees.
\end{lemma}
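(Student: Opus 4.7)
I would argue by contradiction. Suppose $\S$ is not cofinal: pick $Z\in 2^\om$ such that no $X\geqt Z$ lies in $\S$, and after relativizing to $Z$ assume $Z=\emptyset$. The hypothesis then becomes: for every $X\in 2^\om$ there is $Y$ with $\om_1^{X\oplus Y}=\om_1^X$ and no $W\leqt X$ with $W\Equ Y$. I aim to build a perfect set of pairwise $E$-inequivalent reals from this, contradicting the hypothesis that $E$ has no perfect set of inequivalent classes.

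The construction I have in mind is a perfect-tree fusion using the Burgess approximations from Lemma~\ref{le: Burguess}. Build inductively finite strings $\rho_\sigma\in 2^{<\om}$ and countable ordinals $\alpha_\sigma$ for $\sigma\in 2^{<\om}$, with $\rho_\sigma\subseteq\rho_\tau$ when $\sigma\subseteq\tau$, $\rho_{\sigma\concat 0}\perp\rho_{\sigma\concat 1}$, and such that any pair $R\supseteq\rho_{\sigma\concat 0}$, $R'\supseteq\rho_{\sigma\concat 1}$ satisfies $\neg(R\Equu{\alpha_\sigma} R')$. Once built, for distinct $f,g\in 2^\om$ with longest common initial segment $\sigma$, the reals $R_f=\bigcup_n\rho_{f\upto n}$ and $R_g=\bigcup_n\rho_{g\upto n}$ extend $\rho_{\sigma\concat 0}$ and $\rho_{\sigma\concat 1}$ in some order, so they fail $E_{\alpha_\sigma}$-equivalence; since $E=\bigcap_\alpha E_\alpha$, they fail $E$-equivalence. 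The set $\{R_f:f\in 2^\om\}$ is then the desired perfect set.

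At each splitting step I would call on the assumption to produce the branching. Given the partial tree built so far, let $X$ be a real Turing-computing all the $\rho_\tau$ already chosen. Applying the assumption to $X$ yields $Y$ with $\om_1^{X\oplus Y}=\om_1^X$ and hidden from $X$; by Remark~\ref{rmk: bound}, $\rk(\That_{W,Y})<\om_1^{W\oplus Y}\leq\om_1^X$ for every $W\leqt X$. I would use $Y$ as a separating witness to choose $\rho_{\sigma\concat 0},\rho_{\sigma\concat 1}$ extending $\rho_\sigma$ and an ordinal $\alpha_\sigma<\om_1^X$ strictly above the ranks of the relevant $\That$-trees. The main obstacle will be converting this pointwise separation (between one $X$-computable real and the specific $Y$) into the universal condition demanded by the construction: every extension of $\rho_{\sigma\concat 0}$ must fail $E_{\alpha_\sigma}$-equivalence with every extension of $\rho_{\sigma\concat 1}$. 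Since $\neg E_{\alpha_\sigma}$ is $\Pi^0_{\alpha_\sigma+1}$ rather than open, finite commitments alone do not enforce it; overcoming this requires exploiting the concrete tree structure of $\That$ to force, through long enough finite prefixes, a rank bound on $\That$ below $\alpha_\sigma$ uniformly in all extensions. The hypothesis $\om_1^{X\oplus Y}=\om_1^X$ is precisely what keeps the required $\alpha_\sigma$ within $\om_1^X$, so that each stage of the construction is viable.
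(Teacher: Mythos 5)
There is a genuine gap, and it sits exactly where you flag it: the step that converts the hypothesis into splitting nodes. Your fusion requires, at each $\sigma$, incomparable finite strings $\rho_{\sigma\concat 0}\perp\rho_{\sigma\concat 1}$ and an ordinal $\alpha_\sigma$ such that \emph{every} extension of one is $E_{\alpha_\sigma}$-inequivalent to \emph{every} extension of the other. This universal condition cannot be enforced by finite (clopen) conditions: the rank of $\That_{R,R'}$ depends on the whole pair $(R,R')$, and for any two incomparable strings there are, in general, extensions lying in one and the same $E$-class (consider $E_{\om_1}$, isomorphism of coded well-orders: any clopen rectangle contains pairs coding isomorphic orders), so no rank bound below any $\alpha$ holds uniformly on a basic open rectangle. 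The suggestion to "force, through long enough finite prefixes, a rank bound uniformly in all extensions" is therefore not just unproved but unworkable as stated; to run a splitting/fusion argument of this kind one needs conditions that are $\Si^1_1$ or Borel sets (Gandy--Harrington-style), i.e., essentially re-proving a Silver/Burgess dichotomy. Moreover, the contradiction hypothesis supplies, for each $X$, only a single real $Y$ whose class avoids the $X$-computable reals; this is a pointwise fact about $Y$ versus countably many reals, and you give no mechanism turning it into two disjoint sets of reals that are pairwise inequivalent across the split. Note also that producing "many" classes would not suffice: $E$ can have exactly $\aleph_1$ classes without having perfectly many, so only an actual perfect set of inequivalent reals yields the contradiction, and that is precisely the strong conclusion your construction does not reach.

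For comparison, the paper proves the lemma directly rather than by contradiction: since no $E_\a$ has a perfect set of inequivalent reals, Silver's theorem gives each Borel approximation $E_\a$ only countably many classes; one codes a choice of representatives $A_{\a,n}$ into a predicate $A$, takes an admissible $L_\a[A]$, and uses Barwise compactness to get an ill-founded model whose well-founded ordinal part is exactly $\a$, letting $X$ code a nonstandard ordinal $\a^*$ together with $A$ up to $\a^*$ (so $\om_1^X=\a$). Then for any $Y$ with $\om_1^{X\oplus Y}=\om_1^X$, the set of $\b\in\a^*$ with $(\exists W\leqt X)\ W\Equu\b Y$ is $\Si^1_1(X,Y)$ and contains all true ordinals, so overspill yields a nonstandard $\b^*$ and a witness $Y^*\leqt X$ with $Y^*\Equu\b Y$ for all true $\b<\a$; Remark \ref{rmk: bound} then gives $Y^*\Equ Y$. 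That combination of Silver's theorem on the approximations, Barwise compactness, and overspill is the content your proposal would need to replace, and the finite-string fusion does not replace it.
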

\begin{proof}
To prove that $\S$ is co-final, take any $Z$, and let us build $X\in \S$ with $X\geqt Z$.
By relativizing the rest of the argument, let us assume that $Z$ is computable and that $E$ is lightface $\Si^1_1$, and hence that the tree $\That$ used in Lemma \ref{le: Burguess} is computable.

For each $\a$, there is no perfect set of $E_\a$-inequivalent reals, as otherwise there would be one for $E$.
Silver \cite{Sil80} showed that any Borel equivalence relation without perfectly many classes has countably many classes.
Thus, each $E_\a$ has countably many classes.
For each $\a\in\om_1$, let $\la A_{\a,n}:n\in\om\ra\subseteq 2^\om$ be a list which contains one real of each $E_\a$-equivalence class.
(For the reader who worries about the use of choice, we will see how to avoid it later.)
Let us code this whole sequence as a single subset $A$ of $\om_1\times\om\times\om$:
Just let $(\a,n,m)\in A$ if and only if $m\in A_{\a,n}$.
Recall G\"odel's hierarchy $L_\a[A]$, where $A$ is considered as a relation symbol and $L_{\a+1}[A]$ consist of the first-order definable subsets of $(L_\a[A]; \in,A\cap \a\times\om\times\om)$ (see, for instance, \cite[Section 1.3]{Kan03}).
For some $\a\in\om_1$ we have that $L_{\a}[A]$ is admissible, and that every $\b<\a$ can be coded by a well-ordering of $\om$ within $L_{\a}[A]$.
(For instance, take any $\a$ where $L_\a[A]$ is an elementary substructure of $L_{\om_1^{L[A]}}[A]$.)
Now, using Barwise compactness for the admissible set $L_{\a}[A]$ \cite[Theorem III.5.6]{Bar75} we get an ill-founded model $\M=(M;\in^\M,A^\M)$ of $KP$ whose ordinals have well-founded part equal to $\a$, with $A^\M\upto \a$ coinciding with $A\upto\a$, and satisfying that every ordinal can be coded by a real.
(To show this one has to consider the infinitary theory in the language $L=\{\in, A,c\}$ saying all this, plus axioms saying that the constant symbol $c$ is an ordinal and that any ordinal below $\a$ exists and that $c$ is above it.
Then  observe that whole the set of axioms is $\Si_1(L_\a[A])$, and that, choosing $c$ appropriately, $L_\a[A]$ is a model of any subset of these axioms which is a set in $L_\a[A]$.
Thus, by Barwise compactness \cite[Theorem III.5.6]{Bar75} this theory has a model and its ordinals have well-founded part at least $\a$.
Then, using \cite[Theorem III.7.5]{Bar75}, we get such a model with well-founded part exactly $\a$.)
Let $\a^*\in ON^\M\sminus \a$, and let $X$ be a real in $\M$ coding $\a^*$ and $A^\M\upto \a^*$.
Notice that $\om_1^X=\a$.
(To see this, we have that $\om_1^X\geq\a$ because it codes every initial segment of $\a$, and $\om_1^X\leq\a$ because every $X$-computable well-ordering is isomorphic to an ordinal in $\M$.)

We claim that $X\in \S$.
Consider $Y$ with $\om_1^{X\oplus Y}\leq\a$; We must show that $X$ computes a real $E$-equivalent to $Y$.
Let us think of $\a^*$ as the well-ordering of $\om$ of type $\a^*$ which is coded by $X$.
Let
\[
P=\{\b\in \a^*: (\exists W\leqt X)\ W\Equu\b Y\}.
\]
(Let us remark that when $\b$ is not an true ordinal, i.e.\ $\b\in\a^*\sminus\a$, we can still talk about $\Equu\b$ using the definition from Lemma \ref{le: Burguess}, that is, $X\Equu\b Y \iff \rk(\That_{X,Y})\geq\b \iff \exists f\colon \b\to \That_{X,Y}\ (\forall \g,\d<\b\ (f(\g)\subsetneq f(\d) \rightarrow \g>\d))\}$.)
The set $P\subseteq\om$ is $\Si^1_1(X,Y)$, using this $\Si^1_1$ definition of $E_\b$.
The set $P$ contains all the true ordinals  $\b<\a$ because $X$ computes all the reals $A_{\b,n}$, which are taken one from each $E_\b$-equivalence class. 
We can now apply an overspill argument:
Since $\om_1^{X\oplus Y}\leq\a$, $\a$ (viewed as the initial segment of the presentation of $\a^*$) is not $\Si^1_1(X\oplus Y)$ (as, being the well-ordered part of $\a^*$ is  $\Pi^1_1(X)$, and it cannot be $\Delta^1_1(X,Y)$).
Thus, there must exist a non-standard $\b^*\in P\sminus \a$.
Let $Y^*$ be the witness that $\b^*\in P$.
That is $Y^*\leqt X$ and  $Y^*\Equu{\b^*}Y$.
By the nestedness of these equivalence relations, for all  true ordinals $\b<\a$, $Y^*\ E_{\b}\ Y$.
Since $\om_1^{Y\oplus Y^*}\leq\om_1^{Y\oplus X}=\a$, by Remark \ref{rmk: bound}, we have that $Y^*\Equ Y$ as needed to get that $X\in \S$.

For the interested reader, let us see how to avoid the use of the axiom of choice.
This proof uses the axiom of choice only to define the sequence $A_{\b,n}$, which can be defined directly as follows.
By Shoenfield's absoluteness, for each $\b<\om_1^L$, the sequence $\la A_{\b,n}:n\in\om\ra$ can be taken to be inside $L_{\om_1^L}$, and hence we can define it as the $<_L$-least such that $\forall Y\exists n\ (Y\Equu\b A_{\b,n}) \and \forall n,m\ \neg(A_{\b,n}\Equu\b A_{\b,m})$.
This definition works inside $L_{\om_1^L}$, and hence $(L_{\om_1^L};\in,A)$ is admissible, and we can let $\a=\om_1^L$.
(Unless the reader is worried that for this lemma we might have $\om_1^L=\om_1$, in which case  
  any ordinal $\a$ with $L_\a[A]$ an elementary substructure of  $(L_{\om_1^L};\in,A)$ would work.)
\end{proof}

We are now ready to prove the main theorem.
Let us start by showing that if $E$ does not have perfectly many classes, then $E$ satisfies hyperarithmetic-is-recursive on a cone.

\begin{proof}[Proof of (\ref{pa: no perfect}) $\Rightarrow$ (\ref{pa: hyp is recursive}) in (ZF+$\mathbf\Si^1_1$-Det)]
Consider the set $\S_1$ of the oracles relative to which $E$ satisfies hyperarithmetic-is-recursive,
that is
\[
\S_1=\{X\in 2^\om: \forall Y\leq_{hyp} X \ \exists W\leqt X\ (W \Equ Y))\},
\]
where $Y\leq_{hyp} X$ means that $Y$ is hyperarithmetic in $X$.
This set is $\Si^1_1$ as the quantifier $\forall Y\leq_{hyp}X$ can be replaced by an existential quantifier over all the reals (see \cite[Exersice III.3.11]{Sac90}).
The set $\S_1$ is clearly degree invariant. 
Also, it contains the set $\S$  because, by Spector's theorem, $Y\leq_{hyp}X\Rightarrow \om_1^{X\oplus Y}=\om_1^X$, and hence by Lemma \ref{le: cofinal}, it is co-final in the Turing degrees.
By $\Si^1_1$-Turing Determinacy, which follows from $\Si^1_1$-Determinacy, it contains a cone.
\end{proof}

Let us now show that if $E$ does not have perfectly many classes all the degree spectra of the $E$-equivalence classes are of the form $\{X:\om_1^X\geq \a\}$.

\begin{proof}[Proof of (\ref{pa: no perfect}) $\Rightarrow$ (\ref{pa: low is recursive}) in (ZF+$\mathbf\Si^1_2$-Det)]
Consider the set $\S$ from Lemma \ref{le: cofinal}.
This set is $\Pi^1_2$ and degree invariant.
(We are using that the relation $\om_1^Y=\om_1^X$ is $\Si^1_1$, as it says that every $Y$-computable well-ordering is isomorphic to an $X$-computable ordering, and vice-versa.
Is easy to see that ``$X\in \Sp([Y]_E)$'' is $\Si^1_1$.)
So, by $\Si^1_2$-Turing-deteminacy, which follows from $\Si^1_2$-Detetmiancy, we have that $\S$ contains a cone.

Relativize the rest of the proof to the base of this cone, and hence assume that every real belongs to $\S$.
Take  $Y\in 2^\om$. 
We claim that
\[
\Sp([Y]_E)=\{X\in 2^\om:\om_1^X\geq \om_1^{[Y]}\},
\]
where $\om_1^{[Y]}=\min\{\om_1^W:W\Equ Y\}$.
It is clear from the definition of $\om_1^{[Y]}$ that if $\om_1^{[Y]}>\om_1^X$, then $X$ computes no real $E$-equivalent to $Y$.
Suppose now that $\om_1^{[Y]}\leq\om_1^X$--we need to show that $X$ computes a real $E$-equivalent to $Y$.
Assume, without loss of generality, that $Y$ is so that $\om_1^Y=\om_1^{[Y]}$ (otherwise, replace it by an $E$-equivalent real with this property).

If $\om_1^{X\oplus Y}=\om_1^{X}$,  we would be done because $X\in\S$.
Otherwise, let $Z\geqt Y$ be such that $\om_1^Z=\om_1^X$.
It is proved in \cite[Theorem 2.10]{Har78} (and with a different proof in \cite[Lemma 3.6]{MonVC}), that if  $\om_1^Z=\om_1^X$ there exists a real $G$ with 
\[
\om_1^{X}=\om_1^{X\oplus G} =\om_1^G =\om_1^{G\oplus Z} = \om_1^{Z}.
\]
Since $G\in \S$ and $\om_1^{G\oplus Y}\leq\om_1^{G\oplus Z}=\om_1^G$, there is a $Y_1\leqt G$ such that $Y_1\Equ Y$.
Therefore $\om_1^{Y_1\oplus X}\leq  \om_1^{G\oplus X} =\om_1^{X}$.
Since $X\in \S$, $X$ computes $Y_2$ such that $Y_2 \Equ Y_1$ and hence $Y_2\Equ Y$.
\end{proof}

It not hard to see that (\ref{pa: low is recursive}) implies (\ref{pa: hyp is recursive}).

\begin{proof}[Proof of (\ref{pa: hyp is recursive}) $\Rightarrow$ (\ref{pa: no perfect}) in ZF]
Suppose there is a perfect tree $R\subseteq 2^{<\om}$ all whose paths are $E$-inequivalent.
We need to show that relative to every oracle on a cone, there is a hyperarithmetic real not $E$-equivalent to any computable real.
By relativizing the rest of the proof, assume that this oracle and $R$ are both computable.

First, let us observe that for some $\a<\om_1^{CK}$, all the paths through $R$ are not only $E$-inequivalent, but also $E_\a$-inequivalent:
For each $X,Y\in [R]\times [R]$ with $X\neq Y$ there is an ordinal $\b$ such that $\neg( X\Equu\b Y)$, namely the rank of $\That_{X,Y}$ plus 1 (where $\That_{X,Y}$ is as in Lemma \ref{le: Burguess}).
Thus, $\That$ gives us a computable map from $[R]\times [R]\sminus \{(X,X):X\in [R]\}$ to the class of well-founded trees.
By $\Si^1_1$-boundedness (due to Spector \cite{Spe55}), the ranks of these trees are all bounded below some ordinal  $\a\in\om_1^{CK}$.

Let $G$ be an $(\a+1)$-Cohen-generic real (i.e.\ it decides every $\Si^0_{\a+1}$ formula) computable from $0^{(\a+2)}$, and let $R(G)$ be the path through $R$ following $G$ at every split.
So $R(G)$ is hyperarithmetic.
We claim that is is not $E$-equivalent to any computable real.
Suppose it is, that $X$ is computable and $X\Equ R(G)$.
Since all the paths are $E_\a$-inequivalent, for any other path $Z\in [R]$, $Z\neq R(G)$ we have that $\neg(Z \Equu\a X)$.
The real $G$ can then be defined as the unique real such that $R(G)\Equu\a X$, which is a $\Si^0_{\a+1}$ formula.
By $\a+1$-genericity, there is a condition $p\in 2^{<\om}$ forcing that $G$ satisfies this formula.
But then every other $\a+1$-generic extending $p$ would satisfy this formula too, contradicting the uniqueness of $G$.
\end{proof}

\section{A reversal}   \label{se: reversal}

In this section we show that the use of $\Si^1_1$-determinacy  in proving that (\ref{pa: no perfect}) implies (\ref{pa: hyp is recursive}) is not only sufficient but also necessary.
We do not know, however, if the use of $\Si^1_2$-determinacy  in proving that (\ref{pa: no perfect}) implies (\ref{pa: low is recursive}) is necessary.

Let us remark that when $E$ is a lightface-$\Si^1_1$ equivalence relation, our proof of (\ref{pa: no perfect}) $\Rightarrow$ (\ref{pa: hyp is recursive}) only uses lightface $\Si^1_1$-determinacy, which is equivalent to the existence of $0^\sharp$.
Thus, have we already proved that (\ref{pa: zero sharp}) implies (\ref{pa: 1 implies 2}) in Theorem \ref{thm: reversal}.

Before proving the theorem, let us review a key lemma by Sami \cite{Sam99}.
First, define 
\[
\S=\{Y\in 2^\om: \exists Z\in 2^\om\ (\om_1^Z=\om_1^Y \and \forall W\leq_{hyp} Z\ (W\leqt Y))\}.
\]
Sami showed that if $\S$ contains a cone, then $0^\sharp$ exists:
He showed \cite[Proposition 3.8]{Sam99} that if $\S$ contains the cone with base $C$, then every $C$-admissible ordinal is a cardinal in $L$, which then implies that $0^\sharp$ exists by a result of Silver \cite[Section 1]{Har78}.

\begin{proof}[Proof of (\ref{pa: 1 implies 2})$\Rightarrow$(\ref{pa: zero sharp})]
To prove that $0^\sharp$ exists, we will prove that the set $\S$ above contains a cone.
For this, we will define a $\Si^1_1$ equivalence relation $E$ without perfectly many classes, and then show that the cone relative to which $E$ satisfies hyperarithmetic-is-recursive is contained in $\S$. 

Let $R$ be the set of all reals coding a structure isomorphic to  $(L_\a(A);\in)$ for some ordinal $\a\in\om_1$ and some $A\subseteq\om$.
This set is $\Pi^1_1$ since to verify that a model is a presentation of $L_\a(A)$ all one needs to do is check well-foundedness, and then check that each level is defined from the previous one correctly.

Consider the equivalence relation $E$ that holds of presentations of the structures $L_{\a_X}(A_X)$ and $L_{\a_Y}(A_Y)$ respectively if $\a_X=\a_Y$ and $\om_1^{A_X}=\om_1^{A_Y}$, and which lets all the reals outside $R$ be equivalent to each other.
This relation is $\Si^1_1$, since $R$ is $\Pi^1_1$, deciding if $\a_X=\a_Y$ is $\Si^1_1$ and deciding if $\om_1^{A_X}=\om_1^{A_Y}$ is also $\Si^1_1$.
This equivalence relation has $\aleph_1$ equivalence classes, one for each value of the pair $(\a_X,\om_1^{A_X})$.
Since this is true in any model of ZF, $E$ cannot contain perfectly many classes classes (because having perfectly many classes is a $\Si^1_2$ statement).
So, by (\ref{pa: 1 implies 2}), $E$ must satisfy hyperarithmetic-is-recursive on a cone, say with base $C$.
Take $Y\geqt C$--we need to show that $T\in \S$.
For each $\a<\om_1^Y$ there is a presentation of $(L_\a(Y);\in)$ which is hyperarithmetic-in-$Y$.
But then $Y$ computes a real $E$-equivalent to this presentation, that is, a presentation of $(L_{\a}(Z),\in)$ for some $Z$ with $\om_1^Z=\om_1^Y$.
Let $\a^*$ be a presentation of the Harrison linear ordering \cite{Har68} relative to $Y$, that is, a $Y$-computable linear ordering isomorphic $\om_1^Y+\om_1^Y\cdot\QQ$.
Let 
\[
P=\{\b\in \a^*: Y \mbox{ computes a presentation of $(L_{\b}(Z);\in)$ for some $Z$ with } \om_1^Z=\om_1^Y\}.
\]
This set is $\Si^1_1(Y)$ as, given $\b$, checking that a structure is a presentation of $(L_{\b}(Z);\in)$ is hyperarithmetic, and checking if $\om_1^Z=\om_1^Y$ is $\Si^1_1$.
By our comments before, the set $P$ contains all $\b$ in the well-founded part of $\a^*$, namely $\om_1^Y$.
Therefore, by an overspill argument, $P$ must contain some non-standard $\b^*\in\a^*\sminus \om_1^Y$.
Let $Z^*$ be such that $Y$ computes a copy of $(L_{\b^*}(Z^*);\in)$.
Every real $W$ which is hyperarithmetic in $Z^*$ belongs to $L_{\b}(Z^*)$ for some $\b<\om_1^Y$ and hence belongs this presentation of $(L_{\b^*}(Z^*);\in)$ too.
Therefore, $W\leqt Y$.
We have shown that  $\forall W\leq_{hyp} Z^*\ (W\leqt Y))$ as needed to get that $Y\in \S$.
\end{proof}

\bibliography{bftypes}
\bibliographystyle{alpha}

\end{document}